\theoremstyle{plain}
\newtheorem{theorem}{Theorem}
\newtheorem{lemma}{Lemma}
\theoremstyle{definition}
\theoremstyle{remark}
\DeclareMathOperator{\conv}{conv}
\DeclareMathOperator{\card}{card}
\DeclareMathOperator{\Avg}{Avg}
\newcommand{\RR}{\mathbb{R}}
\DeclareMathOperator{\Vol}{Vol}
\begin{document}



\title[Hermite-Hadamard for simplices]{Another refinement of the right-hand side of the Hermite-Hadamard inequality for simplices}

\author[M. Nowicka]{Monika Nowicka}
\address{Institute of Mathematics and Physics, UTP University of Science and Technology, al. prof. Kaliskiego 7, 85-796 Bydgoszcz, Poland}
\email{monika.nowicka@utp.edu.pl}
\author[A. Witkowski]{Alfred Witkowski}
\email{alfred.witkowski@utp.edu.pl}
\subjclass[2010]{26D15}
\keywords{Hermite-Hadamard inequality, convex function, barycentric coordinates}
\date{16.12.2014}
\setlength{\parindent}{0pt}

\begin{abstract} 
In this paper, we establish a new refinement of the right-hand side of Hermite-Hadamard inequality for convex functions of several variables defined on simplices.
\end{abstract}

\maketitle

%




The classical Hermite-Hadamard inequality states that if $f:I\to\mathbb{R}$ is a convex function then for all $a<b\in I$ the inequality
$$f\left(\frac{a+b}{2}\right)\leq \frac{1}{b-a}\int_a^b f(t)dt\leq \frac{f(a)+f(b)}{2}$$
is valid. This powerful tool has found numerous applications and has been generalized in many directions (see e.g. \cite{DP} and \cite{Bes}). One of those directions is its multivariate version:
\begin{theorem}[\cite{Bes}]
	Let $f:U\to \mathbb{R}$ be a convex function defined on a convex set $U\subset \mathbb{R}^n$ and $\Delta\subset U$ be an $n$-dimensional simplex with vertices $x_0,x_1,\dots,x_n$, then
	\begin{equation}
	f(b_\Delta)\leq \frac{1}{\Vol \Delta}\int_\Delta f(x)dx\leq \frac{f(x_0)+\dots+f(x_n)}{n+1},
	\label{eq:HH for simplex}
	\end{equation}
	where $b_\Delta=\frac{x_0+\dots+x_n}{n+1}$ is the barycenter of $\Delta$ and the integration is with respect to the $n$-dimensional Lebesgue measure.
\end{theorem}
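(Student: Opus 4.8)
The plan is to reduce both inequalities to the affine change of variables furnished by barycentric coordinates. Let $\Delta^n=\{\lambda=(\lambda_1,\dots,\lambda_n):\lambda_i\ge 0,\ \lambda_1+\dots+\lambda_n\le 1\}$ be the standard simplex and consider the affine map $\varphi(\lambda)=x_0+\sum_{i=1}^n\lambda_i(x_i-x_0)$, which is a bijection of $\Delta^n$ onto $\Delta$ with constant Jacobian $|\det(x_1-x_0,\dots,x_n-x_0)|=n!\Vol\Delta$. Writing $\lambda_0=1-\lambda_1-\dots-\lambda_n$, every $x\in\Delta$ is uniquely $x=\sum_{i=0}^n\lambda_i(x)\,x_i$ with $\lambda_i(x)\ge 0$ and $\sum_{i=0}^n\lambda_i(x)=1$; the $\lambda_i(x)$ are the barycentric coordinate functions, which are affine on $\Delta$. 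The computational backbone will be the single identity
\[
\int_\Delta\lambda_i(x)\d x=\frac{\Vol\Delta}{n+1},\qquad i=0,1,\dots,n,
\]
which I would deduce not by explicit integration but by symmetry: a permutation $\sigma$ of $\{0,\dots,n\}$ induces an affine self-bijection $T_\sigma$ of $\Delta$ with $T_\sigma(x_i)=x_{\sigma(i)}$ and $|\det DT_\sigma|=1$, under which $\lambda_i\circ T_\sigma=\lambda_{\sigma^{-1}(i)}$, so all $n+1$ integrals $\int_\Delta\lambda_i\d x$ are equal, while their sum is $\int_\Delta\big(\sum_i\lambda_i(x)\big)\d x=\Vol\Delta$.

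For the left-hand inequality I would first check that the centroid of $\Delta$ equals its barycenter: using the parametrization above and the backbone identity,
\[
\frac{1}{\Vol\Delta}\int_\Delta x\d x=\sum_{i=0}^n x_i\cdot\frac{1}{\Vol\Delta}\int_\Delta\lambda_i(x)\d x=\frac{x_0+\dots+x_n}{n+1}=b_\Delta .
\]
Then Jensen's inequality, applied to the convex function $f$ and the probability measure $\frac{1}{\Vol\Delta}\d x$ supported on $\Delta$, gives $f(b_\Delta)=f\!\big(\frac{1}{\Vol\Delta}\int_\Delta x\d x\big)\le\frac{1}{\Vol\Delta}\int_\Delta f(x)\d x$, which is the left-hand side of \eqref{eq:HH for simplex}.

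For the right-hand inequality I would use convexity pointwise: since $x=\sum_{i=0}^n\lambda_i(x)x_i$ is a convex combination of the vertices, $f(x)\le\sum_{i=0}^n\lambda_i(x)f(x_i)$ for every $x\in\Delta$. Integrating this over $\Delta$ and inserting the backbone identity,
\[
\int_\Delta f(x)\d x\le\sum_{i=0}^n f(x_i)\int_\Delta\lambda_i(x)\d x=\frac{\Vol\Delta}{n+1}\sum_{i=0}^n f(x_i),
\]
and dividing by $\Vol\Delta$ yields exactly the right-hand side of \eqref{eq:HH for simplex}.

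The genuinely routine points — the value of the Jacobian, unique solvability for the barycentric coordinates, and the measurability and boundedness of the convex $f$ on the compact set $\Delta$ (needed to justify the integrals and the use of Jensen/Fubini) — are standard. The only place that calls for a moment's care is the symmetry argument for $\int_\Delta\lambda_i\d x$, i.e. verifying that the vertex-permuting affine map $T_\sigma$ is a volume-preserving bijection of $\Delta$ onto itself that permutes the coordinate functions accordingly; that is the main (and quite mild) obstacle, after which everything is bookkeeping.
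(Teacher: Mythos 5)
This theorem is not proved in the paper at all: it is imported verbatim from Bessenyei's article \cite{Bes} and used as a black box, so there is no in-paper argument to compare yours against. Judged on its own, your proof is correct and complete. The symmetry argument for $\int_\Delta\lambda_i\d x=\Vol\Delta/(n+1)$ is sound: the affine map $T_\sigma$ determined by $T_\sigma(x_i)=x_{\sigma(i)}$ maps $\Delta$ onto itself, hence has $|\det DT_\sigma|=1$, and the identity $\lambda_k\circ T_\sigma=\lambda_{\sigma^{-1}(k)}$ follows from uniqueness of barycentric coordinates; combined with $\sum_i\lambda_i\equiv 1$ this pins down each integral. The right-hand inequality then follows by integrating the pointwise bound $f(x)\le\sum_i\lambda_i(x)f(x_i)$, and the left-hand one by Jensen, which is legitimate here because $b_\Delta$ lies in the interior of the full-dimensional convex set $U$, so $f$ admits an affine support function there (this is the one justification worth writing out, given that the paper defines convexity only via restrictions to segments). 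The boundedness and measurability issues you flag are indeed routine: $f$ is bounded above on $\Delta$ by $\max_i f(x_i)$ via the same pointwise bound, bounded below by any support function, and continuous on the interior, whose complement in $\Delta$ is Lebesgue-null. Your barycentric-coordinate route is a clean, self-contained alternative to Bessenyei's original treatment and fits naturally with the notation the rest of the paper builds on.
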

The aim of this note is to  proof a  refinement of the right-hand side  of \eqref{eq:HH for simplex}. 

Let us start with a set of definitions. 
	
A function $f\colon I\to\mathbb{R}$ defined on an interval $I$ is called \textit{convex} if for any $x,y\in I$ and $t\in(0,1)$ the inequality
$$f(tx+(1-t)y)\leq tf(x)+(1-t)f(y)$$
holds.

If $U$ is a convex subset of $\mathbb{R}^n$, then a function $f:U\to\mathbb{R}$ is convex if its restriction to every line segment in $U$ is convex.
 
For $n+1$ points $x_0,\dots, x_n\in\RR^n$ in general position the set  $\Delta=\conv\{x_0,\dots,x_n\}$ is called an $n$-dimensional \textit{simplex}. If $K$ is a nonempty subset of the set  $N=\{0,\dots,n\}$ of cardinality $k$, the set $\Delta_K=\conv\{ x_i: i\in K\}$ is called a \textit{face} (or a $k-1$-face) of $\Delta$. The point $b_K=\frac{1}{k}\sum_{i\in K}x_i$ is called a \textit{barycenter} of $\Delta_K$. The barycenter of $\Delta$ will be denoted by $b$. By $\card K$ we shall denote the cardinality of set $K$.

For each $k-1$-face $\Delta_K$ we calculate the average value of $f$ over $\Delta_K$ using the formula
$$\Avg(f,\Delta_K)=\frac{1}{\Vol(\Delta_K)}\int_{\Delta_K} f(x)dx,$$
where the integration is with respect to the $k-1$-dimensional Lebesgue measure (in case $k=1$ this is the counting measure). 

For $k=1,2,\dots,n+1$ we define 
$$\mathcal{A}(k)=\frac{1}{\binom{n+1}{k}} \sum_{\substack{{K\subset N}\\{\card K=k}}}\Avg(f,\Delta_K).$$

Note that the right-hand side of the inequality \eqref{eq:HH for simplex}  can be rewritten as $\mathcal{A}(n+1)\leq\mathcal{A}(1)$. 
It turns out, that 
\begin{theorem}\label{thm:main1}
			The following chain of inequalities holds:
	$$\mathcal{A}(n+1)\leq \mathcal{A}(n)\leq \dots\leq\mathcal{A}(2)\leq\mathcal{A}(1).$$
\end{theorem}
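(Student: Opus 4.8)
The plan is to reduce the whole chain to one inequality comparing the average of a convex function over a simplex with the averages over its facets, and then to sum that inequality over all faces of one fixed dimension. The key claim is: for every $k$-dimensional simplex $S$ with $k\ge1$, vertices $v_0,\dots,v_k$ and facets $G_j=\conv\{v_i:i\ne j\}$, and for every convex function $f$ defined on $S$,
$$\Avg(f,S)\le\frac1{k+1}\sum_{j=0}^k\Avg(f,G_j).$$
Granting the claim, fix $1\le k\le n$ and apply it to every face $\Delta_L$ with $\card L=k+1$; this is a $k$-dimensional simplex whose facets are exactly the $\Delta_{L\setminus\{j\}}$ with $j\in L$, so $\Avg(f,\Delta_L)\le\frac1{k+1}\sum_{j\in L}\Avg(f,\Delta_{L\setminus\{j\}})$. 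Summing over all $L$ of size $k+1$ and interchanging the order of summation on the right — each $K\subset N$ with $\card K=k$ lies in exactly $n+1-k$ sets $L$ of size $k+1$ — gives $\sum_{\card L=k+1}\Avg(f,\Delta_L)\le\frac{n+1-k}{k+1}\sum_{\card K=k}\Avg(f,\Delta_K)$. Dividing by $\binom{n+1}{k+1}$ and using the identity $\binom{n+1}{k+1}\cdot\frac{k+1}{n+1-k}=\binom{n+1}{k}$ turns this into $\mathcal A(k+1)\le\mathcal A(k)$; running $k$ from $1$ to $n$ produces the asserted chain.

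For the claim itself I would decompose $S$ by coning from its barycenter $b_S$: the $k+1$ sub-simplices $C_j=\conv(\{b_S\}\cup G_j)$ have pairwise disjoint interiors, their union is $S$, and each has volume $\frac1{k+1}\Vol S$ because the distance from $b_S$ to $\operatorname{aff}G_j$ equals $\frac1{k+1}$ of the distance from $v_j$ to $\operatorname{aff}G_j$. Hence $\Avg(f,S)=\frac1{k+1}\sum_{j=0}^k\Avg(f,C_j)$. Next, slice each cone $C_j$ by hyperplanes parallel to its base $G_j$, with parameter $s\in[0,1]$ running from $G_j$ ($s=0$) to the apex $b_S$ ($s=1$); since the slice at level $s$ is a homothetic copy of $G_j$ with ratio $1-s$, a change of variables (whose constant is pinned down by testing $f\equiv1$) gives
$$\Avg(f,C_j)=k\int_0^1(1-s)^{k-1}\,\Avg\big(\,y\mapsto f(sb_S+(1-s)y),\ G_j\,\big)\d s.$$
Applying convexity, $f(sb_S+(1-s)y)\le sf(b_S)+(1-s)f(y)$, and evaluating the two elementary integrals $k\int_0^1(1-s)^{k-1}s\d s=\frac1{k+1}$ and $k\int_0^1(1-s)^k\d s=\frac k{k+1}$, one gets $\Avg(f,C_j)\le\frac1{k+1}f(b_S)+\frac k{k+1}\Avg(f,G_j)$; averaging over $j$,
$$\Avg(f,S)\le\frac1{k+1}f(b_S)+\frac k{(k+1)^2}\sum_{j=0}^k\Avg(f,G_j).$$

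The last step — and, I expect, the only genuinely delicate point — is disposing of the stray term $f(b_S)$. This is exactly where the \emph{left-hand} inequality of \eqref{eq:HH for simplex} enters: applied to $S$ it gives $f(b_S)\le\Avg(f,S)$, and substituting this into the previous display and collecting the two $\Avg(f,S)$ terms leaves $\frac k{k+1}\Avg(f,S)\le\frac k{(k+1)^2}\sum_{j}\Avg(f,G_j)$, which is the claim. It is worth stressing why coning from the barycenter is essential: if one instead slices $S$ directly from a vertex $v_i$, the same computation produces a defect term $\frac1{k+1}f(v_i)$, and the average of $f(v_i)$ over $i$ is bounded below — not above — by $\Avg(f,S)$ (that is just the \emph{right-hand} side of Hermite--Hadamard), so it cannot be absorbed. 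The remaining points are routine: the combinatorial bookkeeping of the cone decomposition and the double counting, and the observation that a function convex on $U\subseteq\RR^n$ restricts to a convex function on (the affine hull of) any lower-dimensional face, so that every invocation of \eqref{eq:HH for simplex} in lower dimension is legitimate.
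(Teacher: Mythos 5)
Your proof is correct, and its skeleton matches the paper's: both reduce the chain to the single facet inequality $\Avg(f,S)\le\frac{1}{k+1}\sum_{j}\Avg(f,G_j)$ and then pass to $\mathcal A(k+1)\le\mathcal A(k)$ by summing over all faces of a fixed dimension. You make explicit the double counting (each $K$ with $\card K=k$ lies in $n+1-k$ sets $L$ of size $k+1$, plus the binomial identity) that the paper compresses into the phrase ``a simple induction argument.'' The substantive differences are two. First, your intermediate estimate $\Avg(f,S)\le\frac{1}{k+1}f(b_S)+\frac{k}{(k+1)^2}\sum_j\Avg(f,G_j)$, obtained from the barycentric cone decomposition and the parallel slicing of each cone, is exactly Lemma \ref{lem:WN1}, which the paper imports as a black box from \cite{NoWi1}; your derivation (the volume count, the $(1-s)^{k-1}$ change of variables, and the two Beta integrals, all of which check out) therefore makes the argument self-contained. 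Second, you eliminate the stray term $f(b_S)$ differently: the paper writes $b$ as the average of the facet barycenters \eqref{eq:b}, applies convexity, and then uses the left-hand side of \eqref{eq:HH for simplex} on each \emph{facet} to get $f(b)\le\frac{1}{n+1}\sum_i f(b_{K_i})\le\frac{1}{n+1}\sum_i\Avg(f,\Delta_{K_i})$, whereas you apply the left-hand side of \eqref{eq:HH for simplex} to $S$ itself and absorb $\frac{1}{k+1}\Avg(f,S)$ into the left-hand side. Your absorption is slightly slicker and bypasses the barycenter identity; the paper's route only needs the lower Hermite--Hadamard bound one dimension down. Your closing observation about why one must cone from the barycenter rather than from a vertex (a vertex cone leaves a defect $f(v_i)$ that is bounded \emph{below}, not above, by the average) is accurate and explains why the cited lemma takes the form it does.
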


In the proof we shall use the following
\begin{lemma}[{\cite[Theorem 4.1]{NoWi1}}]\label{lem:WN1}
If $K_i=N\setminus\{i\}$ and $b$ is the barycenter of $\Delta$, then
$$\Avg(f,\Delta)\leq \frac{1}{n+1}f(b)+\frac{n}{n+1}\frac{1}{n+1}\sum_{i=0}^n \Avg(f,\Delta_{K_i}).$$
\end{lemma}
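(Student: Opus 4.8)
The plan is to decompose $\Delta$ into $n+1$ sub-simplices by joining the barycenter $b$ to each facet, and then to estimate the integral over each piece by combining a radial (cone) integration formula with the convexity of $f$ along the rays emanating from $b$. Concretely, for each $i\in N$ let $\Delta^{(i)}=\conv(\{b\}\cup\{x_j:j\neq i\})$ be the cone with apex $b$ over the facet $\Delta_{K_i}$. These cones have pairwise disjoint interiors and cover $\Delta$, and since $b$ has barycentric coordinate $\frac{1}{n+1}$ relative to the vertex $x_i$ that it replaces, each satisfies $\Vol(\Delta^{(i)})=\frac{1}{n+1}\Vol(\Delta)$. Hence $\int_\Delta f\,dx=\sum_{i=0}^n\int_{\Delta^{(i)}}f\,dx$, and it suffices to bound each summand.

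For a fixed $i$ I would parametrize the cone $\Delta^{(i)}$ radially by $x=(1-t)b+ty$ with $y\in\Delta_{K_i}$ and $t\in[0,1]$. The Jacobian of this map factors as $h_i\,t^{n-1}$, where $h_i$ is the distance from $b$ to the hyperplane of $\Delta_{K_i}$, so that
$$\int_{\Delta^{(i)}}f\,dx=h_i\int_0^1 t^{n-1}\!\int_{\Delta_{K_i}}f\big((1-t)b+ty\big)\,d\sigma(y)\,dt,$$
where $d\sigma$ is the $(n-1)$-dimensional measure on the facet. Taking $f\equiv 1$ and comparing with $\Vol(\Delta^{(i)})=\frac{1}{n+1}\Vol(\Delta)$ pins down the constant: one gets $h_i\,\Vol(\Delta_{K_i})=\frac{n}{n+1}\Vol(\Delta)$, which conveniently is independent of $i$.

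The heart of the argument is then a single application of convexity on each ray, $f((1-t)b+ty)\le (1-t)f(b)+t\,f(y)$, inserted into the integral above. Separating the two resulting terms and evaluating the elementary Beta-type integrals $\int_0^1 t^{n-1}(1-t)\,dt=\frac{1}{n(n+1)}$ and $\int_0^1 t^n\,dt=\frac{1}{n+1}$, together with $\int_{\Delta_{K_i}}f\,d\sigma=\Vol(\Delta_{K_i})\,\Avg(f,\Delta_{K_i})$, yields
$$\int_{\Delta^{(i)}}f\,dx\le \frac{\Vol(\Delta)}{(n+1)^2}\,f(b)+\frac{n\,\Vol(\Delta)}{(n+1)^2}\,\Avg(f,\Delta_{K_i}).$$
Summing over $i=0,\dots,n$ and dividing by $\Vol(\Delta)$ produces exactly the claimed inequality, since the $f(b)$ terms collect to $\frac{1}{n+1}f(b)$ and the facet averages to $\frac{n}{n+1}\cdot\frac{1}{n+1}\sum_{i=0}^n\Avg(f,\Delta_{K_i})$. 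I expect the only delicate point to be the justification of the radial change of variables and the factor $t^{n-1}$ in the Jacobian; once the normalizing constant is fixed by the volume computation, the remaining steps are routine, and the geometric fact that the barycentric cones all share the common volume $\frac{1}{n+1}\Vol(\Delta)$ is precisely what makes the final weights come out symmetric.
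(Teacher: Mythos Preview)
Your argument is correct: the barycentric cone decomposition $\Delta=\bigcup_i\Delta^{(i)}$ with $\Vol(\Delta^{(i)})=\tfrac{1}{n+1}\Vol(\Delta)$, the radial parametrization $x=(1-t)b+ty$ with Jacobian $h_i\,t^{n-1}$ (the factor $h_i$ being the perpendicular component of $y-b$, which is independent of $y$), the convexity bound on each ray, and the resulting arithmetic all check out exactly as you wrote them. The normalization $h_i\,\Vol(\Delta_{K_i})=\tfrac{n}{n+1}\Vol(\Delta)$ that you extract from the case $f\equiv 1$ is precisely the cone volume formula, so the ``delicate point'' you flag is standard.

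Note, however, that the present paper does not actually prove this lemma; it is simply quoted from \cite[Theorem~4.1]{NoWi1} and used as a black box in the proof of Theorem~\ref{thm:main1}. There is therefore no proof here to compare your approach against, and your self-contained derivation in fact supplies what the paper leaves to the external reference.
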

\begin{proof}[Proof of Theorem \ref{thm:main1}]
We shall prove first the inequality $\mathcal{A}(n+1)\leq \mathcal{A}(n)$. Let us use the notation from Lemma \ref{lem:WN1}. For $i=0,1,\dots,n$ we have
\begin{equation}
b_{K_i}=\frac{1}{n}\sum_{\substack{j=0\\j\neq i}}^n x_j=\frac{1}{n}\left(\sum_{j=0}^n x_j -x_i\right)=\frac{1}{n}((n+1)b-x_i).
\label{eq:b_is}
\end{equation}
Summing \eqref{eq:b_is} we obtain 
\begin{equation}
b=\frac{1}{n+1}\sum_{j=0}^n b_{K_j}.
\label{eq:b}
\end{equation}
Now using Lemma \ref{lem:WN1} and  convexity of $f$ applied to $\eqref{eq:b}$   we get
\begin{align*}
	\Avg(f,\Delta)&\leq \frac{1}{n+1}f(b)+\frac{n}{n+1}\frac{1}{n+1}\sum_{i=0}^n \Avg(f,\Delta_{K_i})	\\
	&	\leq \frac{1}{n+1}\frac{1}{n+1}\sum_{i=0}^n f(b_{K_i})+\frac{n}{n+1}\frac{1}{n+1}\sum_{i=0}^n \Avg(f,\Delta_{K_i})\\
	\intertext{thus, by the left-hand side of \eqref{eq:HH for simplex}}
	&\leq \frac{1}{n+1}\frac{1}{n+1}\sum_{i=0}^n \Avg(f,\Delta_{K_i})+\frac{n}{n+1}\frac{1}{n+1}\sum_{i=0}^n \Avg(f,\Delta_{K_i})\\
	&=\frac{1}{n+1}\sum_{i=0}^n\Avg(f,\Delta_{K_i}).
\end{align*}
This shows the inequality $\mathcal{A}(n+1)\leq \mathcal{A}(n)$. The other inequalities follow by simple induction argument applying the same reasoning to all terms in $\mathcal{A}(n)$ etc.
\end{proof}

Just for completeness note that similar refinement of the left-hand side of \eqref{eq:HH for simplex} can be found in \cite[Corollary 2.6]{NoWi2}. It reads as follows:
\begin{theorem}
	For  a nonempty subset $K$ of $N$ define the simplex $\Sigma_K$ as follows: let $A_K$ be the affine span of $\Delta_K$ and $A_K'$ be the affine space of the same dimension, parallel to $A_K$ and passing through the barycenter of $\Delta$. Then $\Sigma_K=\Delta\cap A_K'$.
	
	For $k=1,2,\dots,n+1$ we let 
$$\mathcal{B}(k)=\frac{1}{\binom{n+1}{k}} \sum_{\substack{{K\subset N}\\{\card K=k}}}\Avg(f,\Sigma_K).$$
Then
$$f(b)=\mathcal{B}(1)\leq \mathcal{B}(2)\leq\dots\leq \mathcal{B}(n+1)=\Avg(f,\Delta).$$
\end{theorem}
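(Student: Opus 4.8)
The plan is to reduce the whole chain to a single geometric inequality comparing the average of a convex function over a simplex with its average over the central slice parallel to a facet, and then to propagate that inequality through all the $\Sigma_K$ by a double-counting argument. First I would record an explicit description of $\Sigma_K$. Writing points of $\Delta$ in barycentric coordinates $x=\sum_{i\in N}\lambda_i x_i$, the affine hull $A_K$ is cut out by $\lambda_i=0$ $(i\notin K)$, its direction space is spanned by the $x_i-x_j$ with $i,j\in K$, and $A_K'$ is therefore cut out by $\lambda_i=\tfrac1{n+1}$ $(i\notin K)$. Hence
$$\Sigma_K=\left\{\sum_{i\in N}\lambda_i x_i \ :\ \lambda_i\ge0,\ \lambda_i=\tfrac1{n+1}\ (i\notin K),\ \sum_{i\in K}\lambda_i=\tfrac{\card K}{n+1}\right\},$$
a $(\card K-1)$-dimensional simplex with vertices $v^K_j=b+\tfrac{\card K}{n+1}(x_j-b_K)$, $j\in K$, and barycenter $b$; in other words $\Sigma_K$ is a homothetic copy of the face $\Delta_K$, shrunk by the factor $\tfrac{\card K}{n+1}$ and recentred at $b$. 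Taking $K=\{i\}$ gives $\Sigma_K=\{b\}$, so $\mathcal{B}(1)=f(b)$, and taking $K=N$ gives $\Sigma_N=\Delta$, so $\mathcal{B}(n+1)=\Avg(f,\Delta)$; these are the two endpoint equalities.

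The core is the following claim: if $\Gamma$ is an $m$-dimensional simplex, $g$ convex on $\Gamma$, and $\Sigma$ is the slice of $\Gamma$ by the affine hyperplane through the barycenter of $\Gamma$ parallel to one of its facets $F$, then $\Avg(g,\Sigma)\le\Avg(g,\Gamma)$. To prove it, let $y_0$ be the vertex opposite $F$ and foliate $\Gamma$ by $\Gamma_s=sy_0+(1-s)F$, $s\in[0,1]$, where $s$ is the barycentric coordinate of $y_0$; the central slice is $\Gamma_{1/(m+1)}=\Sigma$. Put $h(s)=\Avg(g,\Gamma_s)=\Avg_{z\in F}g(sy_0+(1-s)z)$. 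For each fixed $z$ the map $s\mapsto g(z+s(y_0-z))$ is a restriction of $g$ to a line, hence convex, so $h$ is convex on $[0,1]$. Since $\Vol_{m-1}(\Gamma_s)=(1-s)^{m-1}\Vol_{m-1}(F)$, a Fubini computation gives $\Avg(g,\Gamma)=m\int_0^1(1-s)^{m-1}h(s)\d s$ — that is, $h$ averaged against the probability density $m(1-s)^{m-1}$ on $[0,1]$, whose mean is $m\int_0^1 s(1-s)^{m-1}\d s=\tfrac1{m+1}$. Jensen's inequality then yields $\Avg(g,\Gamma)\ge h\big(\tfrac1{m+1}\big)=\Avg(g,\Sigma)$.

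Next I would observe the self-similarity: if $K\subsetneq L$ with $\card L=\card K+1$, say $L=K\cup\{i_0\}$, then $\Sigma_K$ is precisely the slice of $\Sigma_L$ through its barycenter parallel to the facet $\conv\{v^L_j:j\in K\}$. Indeed $A_K'\subseteq A_L'$ forces $\Sigma_K\subseteq\Sigma_L$; the barycenter of $\Sigma_K$ is $b$, which is also the barycenter of $\Sigma_L$; and $\Sigma_K$ has the direction of $\Delta_K$, which (homothety preserves directions) is the direction of that facet of $\Sigma_L$ — so $\Sigma_K$ must be that central slice. Applying the claim to $\Gamma=\Sigma_L$ and $g=f|_{\Sigma_L}$ (still convex) gives $\Avg(f,\Sigma_K)\le\Avg(f,\Sigma_L)$ for every such pair. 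Summing over all flags $K\subset L$ with $\card K=k$ and $\card L=k+1$, and using that each such $K$ lies in exactly $n+1-k$ such $L$ while each such $L$ contains exactly $k+1$ such $K$, yields
$$(n+1-k)\sum_{\substack{K\subset N\\ \card K=k}}\Avg(f,\Sigma_K)\ \le\ (k+1)\sum_{\substack{L\subset N\\ \card L=k+1}}\Avg(f,\Sigma_L);$$
since $(n+1-k)\binom{n+1}{k}=(k+1)\binom{n+1}{k+1}$, dividing through converts this into $\mathcal{B}(k)\le\mathcal{B}(k+1)$, which completes the chain.

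The two places that carry the real content are the proof of the claim and the self-similarity step. In the claim one must recognise that the cross-sectional volume profile along the foliation is $(1-s)^{m-1}$ and that its first moment is exactly $\tfrac1{m+1}$, so that the central slice sits precisely at the mean of the weighting measure — this is what makes a bare application of Jensen suffice. The self-similarity is intuitively clear but is cleanest to verify in coordinates: computing the barycentric coordinates of a point of $\Sigma_L$ with respect to the vertices $v^L_j$, one finds that the coefficient of $v^L_{i_0}$ equals $\tfrac{n+1}{\card L}\,\lambda_{i_0}$, so it attains its barycentric value $\tfrac1{\card L}$ exactly when $\lambda_{i_0}=\tfrac1{n+1}$, i.e. exactly on $\Sigma_K$.
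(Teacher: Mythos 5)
Your argument is correct, and it is worth noting that the paper itself offers no proof of this statement: it is quoted ``for completeness'' with a pointer to \cite[Corollary 2.6]{NoWi2}, so you have supplied a self-contained proof where the text gives only a citation. All the individual steps check out: the barycentric description of $\Sigma_K$ (cut out by $\lambda_i=\tfrac1{n+1}$ for $i\notin K$) and the two endpoint identities $\Sigma_{\{i\}}=\{b\}$, $\Sigma_N=\Delta$ are right; the Fubini--Jensen lemma is sound, since the cross-sections $sy_0+(1-s)F$ are genuinely parallel translates-with-homothety of $F$, so Cavalieri gives the weight $m(1-s)^{m-1}\d s$, whose mean $\tfrac1{m+1}$ is exactly the barycentric height of the central slice, and $h$ is convex as an average of restrictions of $g$ to lines; the nesting $\Sigma_K=\Sigma_L\cap A_K'$ for $K\subset L$ follows from $A_K'\subseteq A_L'$ exactly as you say; and the double count $(n+1-k)\binom{n+1}{k}=(k+1)\binom{n+1}{k+1}$ converts the pairwise inequalities into $\mathcal{B}(k)\leq\mathcal{B}(k+1)$. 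Structurally your proof mirrors the paper's proof of its Theorem 2 (a one-step inequality between consecutive levels, propagated by combinatorial averaging over subsets), but where that proof leans on the imported Lemma 1, you replace the corresponding key step by a direct and elementary computation --- identifying each $\Sigma_K$ as a central slice of the next $\Sigma_L$ and applying Jensen against an explicit Beta-type density. This buys independence from the cited companion papers at essentially no cost in length, and the slice lemma is of independent interest.
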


\end{document}